\newcommand{\C}{\mathbb{C}}
\newcommand{\T}{\mathbb{T}}
\newcommand{\D}{\mathbb{D}}
\newtheorem{theorem}{Theorem}
\newtheorem{corollary}[theorem]{Corollary}
\theoremstyle{definition}
\newtheorem{remark}[theorem]{Remark}
\DeclareMathOperator{\Lip}{Lip}
\DeclareMathOperator{\Ind}{ind}
\DeclareMathOperator{\wind}{wind}
\begin{document}
\title[Toeplitz operators on $H^1$]{Toeplitz operators with piecewise continuous symbols on the Hardy space $H^1$} 
\author[Miihkinen and Virtanen]{Santeri Miihkinen and Jani A.~Virtanen}

\address{{Department of Mathematics, \AA}bo Akademi University, Tuomiokirkontori~3, Turku, Finland}
\email{santeri.miihkinen@abo.fi}

\address{Department of Mathematics and Statistics, University of Reading, Reading RG6~6AX, England}
\email{j.a.virtanen@reading.ac.uk}

\keywords{Toeplitz operators, Hardy spaces, Fredholm properties, essential spectrum, piecewise continuous symbols}

\subjclass[2010]{Primary 47B35; Secondary 30H10}

\thanks{J. Virtanen was supported in part by Engineering and Physical Sciences Research Council grant EP/M024784/1}

\maketitle

\begin{abstract}
The geometric descriptions of the (essential) spectra of Toeplitz operators with piecewise continuous symbols are among the most beautiful results about Toeplitz operators on Hardy spaces $H^p$ with $1<p<\infty$. In the Hardy space $H^1$, the essential spectra of Toeplitz operators are known for continuous symbols and symbols in the Douglas algebra $C+H^\infty$. It is natural to ask whether the theory for piecewise continuous symbols can also be extended to $H^1$. We answer this question in the negative and show in particular that the Toeplitz operator is never bounded on $H^1$ if its symbol has a jump discontinuity. 
\end{abstract}

\section{Introduction}
For $1\leq p\leq \infty$, the {\it Hardy space} $H^p$ of the unit circle $\T$ is defined by
\begin{equation}
	H^p = \{ f\in L^p : f_k = 0\ {\rm for}\ k<0\},
\end{equation}
where $f_k$ stands for the $k$th Fourier coefficient of $f$, and the {\it orthogonal (Riesz) projection} of $L^2$ onto $H^2$ is denoted by $P$. The Riesz projection $P$ can be expressed as a singular integral operator as follows:
\begin{equation}
	P=\frac12(I+S),
\end{equation}
where $S$ is the Cauchy singular integral operator defined by
\begin{equation}
	Sf(t) = \frac1{\pi i} \int_{\T} \frac{f(\tau)}{\tau - t} \qquad(t\in\T),
\end{equation}
which is understood in the Cauchy principal value sense. It is well known that the Cauchy singular integral operator is bounded on $L^p$ for $1<p<\infty$ (see, e.g.,~\cite{BK} or~\cite{BS}) and hence $P$ is a bounded linear operator of $L^p$ onto $H^p$ for $1<p<\infty$. When $p=1$ or $p=\infty$, both operators $S$ and $P$ are unbounded. 

For $a\in L^\infty$, we define the {\it Toeplitz operator} $T_a$ on $H^p$ with {\it symbol} $a$ by
\begin{equation}
	T_a f = P(af).
\end{equation}
It is clear that $T_a$ is bounded on $H^p$ if $1<p<\infty$. It is also known that there are no unbounded symbols that generate bounded Toeplitz operators on $H^p$, and that nontrivial Toeplitz operators cannot be compact. The question of boundedness in $H^1$ is much more interesting. Indeed, the Toeplitz operator $T_a$ is bounded on $H^1$ if and only if
$a\in L^\infty$ and $Qa$ is of logarithmic bounded mean oscillation (where $Q = I-P$ is the complementary projection). This result has been proved by several authors---most recently in~\cite{PapV}. 

Much less is known about their spectral properties  when $p=1$, and in particular, the Fredholm properties of Toeplitz operators are only understood for certain continuous symbols and symbols in the Douglas algebra. 

In the next section, we recall some basic theory on (logarithmic) mean oscillation and then present a well-known geometric description of the essential spectra of Toeplitz operators $T_a : H^p\to H^p$ with piecewise continuous symbols $a$, which can be obtained as the union of the limit values of $a$ and certain $p$-circular arcs joining the jumps of the symbol $a$. In the last section, we consider the same problem for Toeplitz operators on the Hardy space $H^1$. It may be tempting to ask what should replace the $p$-circular arcs in this case. Yet, perhaps surprisingly, we show that $T_a$ is never bounded on $H^1$ if $a$ possesses a jump and therefore the question has no meaning in the world of bounded linear operators. We do this by combining a little known result of Lindel\"of on limit values of functions in $H^\infty$ and a decomposition of the symbol class that generate bounded Toeplitz operators on $H^1$.

\section{Logarithmically weighted bounded mean oscillation}
We say that a function $f\in L^1$ is of \emph{bounded mean oscillation} and write $f\in BMO$ if
\begin{equation}\label{e:BMO}
	\sup_{I} \frac1{|I|} \int_I |f-f_I|<\infty
\end{equation}
over all subarcs $I$ of $\T$, where $f_I = |I|^{-1} \int_I f$. If, in addition,
\begin{equation}\label{e:VMO}
	\lim_{\delta \to 0} \sup_{|I|<\delta} \frac1{|I|} \int_I |f-f_I| = 0,
\end{equation}
we say that $f$ is of \emph{vanishing mean oscillation} and write $f\in VMO$. Note that
\begin{equation}
	L^\infty \subset BMO \subset \bigcap_{1\leq p<\infty} L^p,\quad C\subset VMO \subset BMO,
\end{equation}
where $C$ is the space of all continuous functions on $\T$ (see, e.g., 1.48 of~\cite{BS}). These spaces can be used to characterize bounded and compact Hankel operators $H_a$ on $H^p$; that is, $H_a$ is bounded if and only if $Pa\in BMO$; while $H_a$ is compact if and only if $Pa\in VMO$.

In the context of $H^1$, we equip the two spaces with logarithmic weights as follows. We say that a function $f\in L^1$ is of \emph{logarithmic bounded mean oscillation} and write $f\in BMO_{\log}$ if
\begin{equation}\label{e:BMOlog}
	\sup_{I} \frac{\log\frac{4\pi}{|I|}}{|I|} \int_{\T} | f- f_I | < \infty.
\end{equation}
If, in addition, 
\begin{equation}\label{e:VMOlog}
	\lim_{\delta \to 0} \sup_{|I|<\delta} \frac{\log\frac{4\pi}{|I|}}{|I|} \int_I |f-f_I| = 0,
\end{equation}
we say that $f$ is of \emph{logarithmic vanishing mean oscillation} and write $f\in VMO_{\log}$. Characterizations of bounded and compact Hankel operators on $H^1$ can be given in terms of these spaces analogously to the case $1<p<\infty$  (see~\cite{PapV}). For bounded Toeplitz operators on $H^1$, we have the following characterization (see~\cite{PapV} and the references therein):

\begin{theorem}\label{boundedness}
A Toeplitz operator $T_a$ is bounded on $H^1$ if and only if $a\in L^\infty$ and $Qa\in BMO_{\log}$.
\end{theorem}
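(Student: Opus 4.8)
The plan is to prove both implications at once by reducing the boundedness of $T_a$ on $H^1$ to that of an associated Hankel operator on $H^1$, for which the $BMO_{\log}$-characterization mentioned above is available.

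First I would show that boundedness of $T_a$ on $H^1$ already forces $a\in L^\infty$, via $H^1$--$BMOA$ duality. For an analytic polynomial $\psi$ of degree at most $2N$, write $\psi=z^{N}\phi$ with $\phi$ a trigonometric polynomial; then the $N$-th Taylor coefficient of $T_a\psi=P(a\psi)$ is $\widehat{a\psi}(N)$, so, with $\langle\cdot,\cdot\rangle$ the $H^1$--$BMOA$ pairing, $\langle T_a\psi,z^{N}\rangle=\widehat{a\psi}(N)=\int_{\T}a\phi$. Since $\|z^{N}\|_{BMOA}\le C$ uniformly in $N$ (because $\|z^{N}\|_{L^\infty}=1$) and $\|\psi\|_{H^1}=\|\phi\|_{L^1}$, boundedness of $T_a$ gives
\[
\Bigl|\int_{\T}a\phi\Bigr|=\bigl|\langle T_a\psi,z^{N}\rangle\bigr|\le\|T_a\|\,\|\psi\|_{H^1}\,\|z^{N}\|_{(H^1)^{*}}\le C\|T_a\|\,\|\phi\|_{L^1}.
\]
As trigonometric polynomials are dense in $L^{1}(\T)$, the functional $\phi\mapsto\int a\phi$ extends to $(L^{1})^{*}=L^\infty$, whence $a\in L^\infty$ with $\|a\|_{L^\infty}\le C\|T_a\|$.

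Next, with $a\in L^\infty$ in hand, the multiplication operator $M_a\colon H^1\to L^1$ is bounded. Using $P=I-Q$, on analytic polynomials one has $T_af=af-H_af$, where $H_af:=Q(af)$ is the Hankel operator with symbol $a$ (in this normalization $H_a$ depends only on $Qa$). Since $M_a$ is already bounded into $L^1$, it follows that $T_a$ extends to a bounded operator on $H^1$ if and only if $H_a$ extends to a bounded operator $H^1\to\overline{H^1}$: indeed, for a fixed $f\in H^1$ the sum $af=P(af)+Q(af)$ lies in $L^1$, so the assertions $P(af)\in H^1$ and $Q(af)\in\overline{H^1}$ are equivalent, even though each is nontrivial because $P$ and $Q$ are unbounded on $L^1$. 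Finally, the $H^1$-analogue of the Hankel characterization recalled above (see \cite{PapV}) states that $H_a$ is bounded on $H^1$ if and only if $Qa\in BMO_{\log}$; together with the previous step this is precisely the assertion of the theorem.

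Both reductions are soft, and I expect the real obstacle to lie in the Hankel characterization itself, were one to want a self-contained proof. There the logarithmic weight in $BMO_{\log}$ — rather than plain $BMO$, which suffices for $1<p<\infty$ — is genuinely forced: it is the logarithmic gain needed to compensate for the failure of $L^1$-boundedness of the Riesz projection, and it is usually obtained through a weak factorization $H^1=\{gh:g,h\in H^2\}$ combined with $L\log L$/sharp John--Nirenberg estimates. Granting that, the remaining care is purely bookkeeping: one treats $T_af$ and $H_af$ as a priori formal boundary series and checks that the identity $T_a=M_a-H_a$ and the attendant norm comparisons are valid at that level.
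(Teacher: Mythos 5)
The paper does not actually prove this theorem: it states it as known and cites \cite{PapV}, so there is no in-paper argument to compare against. That said, your reduction is correct and is essentially the route the cited literature takes. The necessity of $a\in L^\infty$ by testing on $\psi=z^N\phi$ is the standard Brown--Halmos-type argument, and the passage between $T_a$ and the Hankel operator via $P(af)+Q(af)=af$ together with $\|af\|_{L^1}\le\|a\|_{L^\infty}\|f\|_{L^1}$ is sound, including your observation that the equivalence holds at the level of formal boundary series once one works on the dense subspace of polynomials. One small simplification: the appeal to Fefferman duality is unnecessary, since for $h\in H^1$ one has $|\widehat{h}(N)|\le\|h\|_{L^1}$ directly, so $\bigl|\int_{\T}a\phi\bigr|=\bigl|\widehat{T_a\psi}(N)\bigr|\le\|T_a\|\,\|\phi\|_{L^1}$ with no duality constant. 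The more substantive caveat is that your argument delegates all of the analytic content to the statement that $f\mapsto Q(af)$ is bounded on $H^1$ if and only if $Qa\in BMO_{\log}$; that is precisely the hard theorem (due to Janson, Peetre, Semmes, Tolokonnikov, Bonami--Bruna, and treated in \cite{PapV}), and your closing remarks about weak factorization and the origin of the logarithmic weight are heuristic rather than a proof. Since the paper itself treats the theorem as a citation, stopping at the Hankel characterization is a defensible endpoint, but you should be clear that what you have is a reduction of one cited theorem to another cited theorem, not a self-contained proof.
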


When dealing with Toeplitz operators on $H^1$ with piecewise continuous symbols, we need the following description of $BMO_{\log}$ due to Janson~\cite{Jan}:
\begin{equation}\label{e:Janson}
	BMO_{\log} = \{ f + Pg : f,g\in \Lip_{\log} \},
\end{equation}
where $\Lip_{\log}$ is the logarithmic Lipschitz space defined by
\begin{equation}
	\Lip_{\log} = \left\{ f\in C : \sup_{w,z\in\T} \log (4|w-z|^{-1}) |f(w)-f(z)| < \infty\right\}.
\end{equation}
A simple consequence of this result is the following decomposition:
\begin{equation}\label{e:decomposition}
	\{ f\in L^\infty : Qf\in BMO_{\log} \} = \Lip_{\log} + H^\infty.
\end{equation}
Indeed, if $a = l+h \in \Lip_{\log}+H^\infty$, then clearly $a\in L^\infty$ and $Qa = Ql\in BMO_{\log}$ according to \eqref{e:Janson}. Conversely, if $a\in L^\infty$ and $Qa\in BMO_{\log}$, then $Qa = f + Pg$ for some $f,g\in \Lip_{\log}$, and so $Qa=Qf$. Thus, $a-f\in H^\infty$ and so we have $a = f+(a-f) \in \Lip_{\log} + H^\infty$.

\section{Piecewise continuous symbols}

One of most beautiful results about Toeplitz operators is the geometric description of their (essential) spectra for piecewise continuous symbols; see~\cite{BK, GK}, which includes the most general case of Hardy spaces $H^p(\mu, \Gamma)$ with Muckenhoupt weights $\mu$, Carleson curves $\Gamma$ and $1<p<\infty$. Here we recall the result for unweighted Hardy spaces of the unit circle.

We say that a bounded linear operator $T$ on a Banach space $X$ is a \emph{Fredholm} operator of \emph{index} $\kappa$ if both $\dim\ker T$ and $\dim X/T(X)$ are finite, and $\kappa = \Ind T = \dim\ker T - \dim X/T(X)$. The essential spectrum $\sigma_{\rm ess}(T)$ of $T$ is defined by
$$
	\sigma_{\rm ess}(T) = \{\lambda\in\C : T-\lambda\ \textrm{is not Fredholm}\}.
$$
For $a\in L^\infty$ and $1<p<\infty$, it is well known that $T_a$ is invertible on $H^p$ if and only if $T_a$ is Fredholm of index zero. If $a$ is piecewise continuous, then
\begin{equation}\label{e:PC}
	\sigma_{\rm ess} (T_a) =
\left(\bigcup_{t \in \mathbb{T}} \{a(t \pm 0)\}\right)
\bigcup
\left(\bigcup_{a(t - 0) \not= a(t + 0)}
\text{\rm Arc}_p(a; t)\right) , 
\end{equation}
where
$$
\text{\rm Arc}_p(a; t) :=
\left\{\zeta \in \mathbb{C} : 
\arg\frac{a(t - 0) - \zeta}{a(t + 0) - \zeta} = \frac{2\pi}p\right\}
$$
is the $p$-circular arc consisting of the points from which the chord  $[a(t + 0), a(t - 0)]$, $a(t - 0) \not= a(t + 0)$ is seen at the angle $2\pi/p$; see Figure~\ref{fig}.

\begin{figure}
\caption{Examples of $Arc_p(a; t)$ for values $p = 3$ (left) and $p = \frac{3}{2}$ (right).}\label{fig}
\includegraphics[width=1\textwidth, trim={0 13cm 0 3cm},clip]{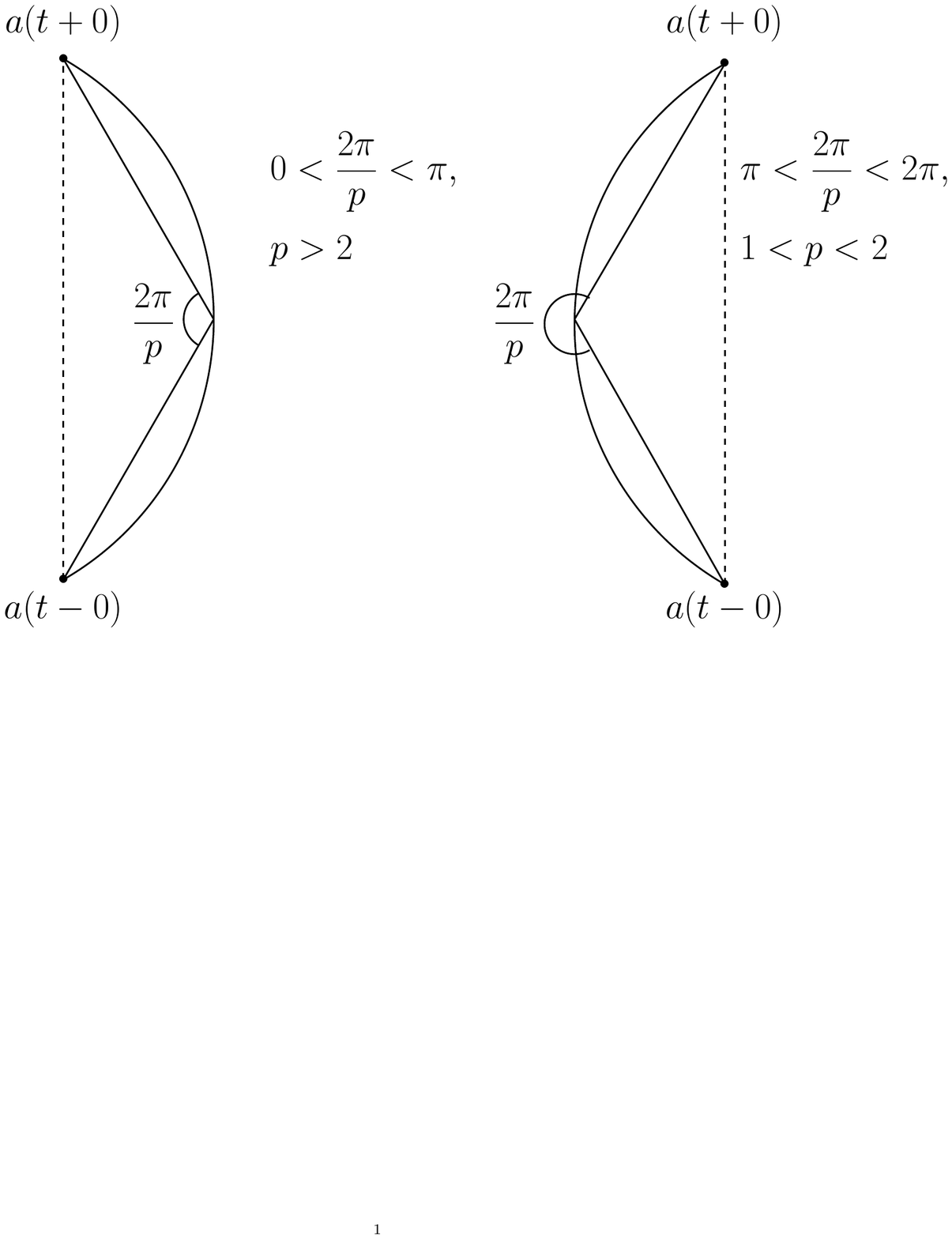} 
\end{figure}

We now turn our attention to the Fredholm properties of Toeplitz operators on $H^1$. The following result of~\cite{V} deals with continuous symbols.

\begin{theorem}\label{continuous symbols}
Let $a \in C \cap VMO_{\log}.$ Then $T_a$ is Fredholm if and only if $a$ is nowhere zero, in which case $\Ind T_a = -\wind a$.
\end{theorem}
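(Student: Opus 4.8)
The plan is to transplant the classical symbol calculus for Toeplitz operators with continuous symbols to $H^1$, using Theorem~\ref{boundedness}, Janson's description~\eqref{e:Janson}, and the $H^1$-analogues of the Hankel operator estimates of~\cite{PapV}. First I would record the structural facts about the symbol class: $C\cap VMO_{\log}$ is a unital subalgebra of $C$ that is closed under complex conjugation, under the reflection $a(t)\mapsto a(\bar t)$, under multiplication by $C^\infty$ functions, and under $a\mapsto 1/a$ for nowhere-vanishing $a$. The reciprocal claim I would deduce from the pointwise bound $|1/a(x)-1/a(y)|\leq(\min_{\T}|a|)^{-2}|a(x)-a(y)|$ together with the elementary two-sided comparison (with constants $1$ and $2$) between $\frac1{|I|}\int_I|f-f_I|$ and $\frac1{|I|^2}\int_I\int_I|f(x)-f(y)|\,dx\,dy$, which recasts~\eqref{e:VMOlog} as a condition on pairwise differences of values; membership in $C\cap VMO_{\log}$ is then preserved under composition with any map that is Lipschitz on the relevant range. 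By Theorem~\ref{boundedness} each such $a$ gives a bounded $T_a$ on $H^1$ (indeed $a\in L^\infty$ and $Qa\in BMO_{\log}$ by~\eqref{e:Janson}), and in the semi-commutator identity $T_aT_b-T_{ab}=-P M_a Q M_b|_{H^1}$, valid for $a,b$ in this class, the right-hand side factors as a composition of two Hankel-type operators on $H^1$ whose symbols are obtained from $a$ and $b$ by applying $P$, $Q$ and the reflection, hence again lie in $VMO_{\log}$; invoking the $H^1$-version of the $VMO$-characterization of compact Hankel operators from~\cite{PapV}, I would conclude that $T_aT_b-T_{ab}$ is compact, so that $a\mapsto T_a+\mathcal K(H^1)$ is a unital algebra homomorphism on $C\cap VMO_{\log}$.

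The sufficiency direction is then immediate: if $a$ is nowhere zero, then $1/a$ lies in the same class and $T_{1/a}$ is a two-sided parametrix for $T_a$ modulo compacts, so $T_a$ is Fredholm. For the index I would set $n=\wind a$ and factor $a=z^n b$ with $b=z^{-n}a\in C\cap VMO_{\log}$, $\wind b=0$; by multiplicativity of the Fredholm index modulo compacts, $\Ind T_a=\Ind T_{z^n}+\Ind T_b$. A direct computation identifies $T_{z^n}$ on $H^1$ with the shift of order $n$, so $\Ind T_{z^n}=-n$. Since $\wind b=0$, $b$ admits a continuous logarithm $g=\log b$, and $g\in C\cap VMO_{\log}$: the exponential is bi-Lipschitz on the compact set $g(\T)$ (a short compactness argument using that the derivative of $\exp$ never vanishes), so $|g(x)-g(y)|\leq C|b(x)-b(y)|$ and the comparison from the first step applies. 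The path $s\mapsto b_s=\exp(sg)$, $s\in[0,1]$, then runs through the nowhere-vanishing elements of $C\cap VMO_{\log}$ from $1$ to $b$, every $T_{b_s}$ is Fredholm, and — granted that $s\mapsto T_{b_s}$ is continuous in the operator norm — local constancy of the Fredholm index forces $\Ind T_b=\Ind T_1=0$. Hence $\Ind T_a=-n=-\wind a$.

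For the necessity direction I would argue directly. Suppose $a(t_0)=0$ for some $t_0\in\T$. For $w\in\D$ put $\kappa_w(z)=(1-|w|^2)(1-\bar w z)^{-2}\in H^1\cap H^2$; since $|\kappa_w|$ equals the Poisson kernel $P_w$, we have $\|\kappa_w\|_{H^1}=1$, and continuity of $a$ at $t_0$ with $a(t_0)=0$ gives $\|a\kappa_w\|_{L^1}=\int_\T|a|\,P_w\to 0$ as $w\to t_0$. Using $T_a(z^M\kappa_w)=z^M\,P_{[-M,\infty)}(a\kappa_w)$ and $a\kappa_w\in L^2$, one gets $\|T_a(z^M\kappa_w)\|_{H^1}\leq\|a\kappa_w\|_{L^1}+\|(I-P_{[-M,\infty)})(a\kappa_w)\|_{L^2}$, the last term tending to $0$ as $M\to\infty$. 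Choosing $w_j\to t_0$ with $\|a\kappa_{w_j}\|_{L^1}<1/j$, then $M_j\uparrow\infty$ making the tail term $<1/j$, and setting $f_j=z^{M_j}\kappa_{w_j}$, I obtain $\|f_j\|_{H^1}=1$ and $\|T_af_j\|_{H^1}<2/j\to 0$, while $\widehat{f_j}(k)\to 0$ for each fixed $k$, so $(f_j)$ has no norm-convergent subsequence. This contradicts Fredholmness: writing $f_j=g_j+y_j$ with $g_j\in\ker T_a$ (finite-dimensional) and $y_j$ in a fixed closed complement on which $T_a$ is bounded below, boundedness below would give $y_j\to 0$, hence $f_j-g_j\to 0$ and $(f_j)$ relatively compact. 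Therefore $T_a$ is not Fredholm.

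The hard part, I expect, is everything that separates $H^1$ from $H^p$ with $p>1$: checking that $P$, $Q$ and the exponential map keep symbols inside $VMO_{\log}$, and, above all, establishing the norm-continuity of $s\mapsto T_{\exp(sg)}$ on $H^1$ — equivalently, that the operator norm of a bounded Toeplitz operator on $H^1$, comparable to $\|\phi\|_\infty+\|Q\phi\|_{BMO_{\log}}$, is controlled under products and exponentials of symbols from $C\cap VMO_{\log}$. This is precisely where I would bring in Janson's decomposition~\eqref{e:Janson} and the Banach-algebra structure of $\Lip_{\log}$; once this multiplier bookkeeping is in place, what remains is a routine transcription of the classical continuous-symbol argument, and it also explains why the hypothesis $a\in VMO_{\log}$ (rather than merely $a\in C$) is the natural one.
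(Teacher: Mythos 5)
A preliminary remark: the paper does not actually prove this theorem---it quotes it from \cite{V}---so there is no in-paper argument to compare with line by line. The paper's only hint at the proof is the later comment that ``the approach of Theorem~\ref{continuous symbols} \dots relies on the compactness of Hankel operators, which requires that $Pa\in VMO_{\log}$,'' and this confirms that the route in \cite{V} is exactly the one you propose: compact semi-commutators $T_aT_b-T_{ab}$ via compact Hankel operators with $VMO_{\log}$ symbols, reduction of the index to $T_{z^n}$, and a separate argument when $a$ vanishes. Your necessity direction is complete and correct as written: $|\kappa_w|$ is the Poisson kernel, so $\|z^{M}\kappa_w\|_{H^1}=1$ and $\|a\kappa_w\|_{L^1}\to|a(t_0)|=0$ by continuity, the tail estimate through $L^2$ is legitimate because $a\kappa_w\in L^2$, and the weakly null normalized sequence with $\|T_af_j\|\to 0$ does contradict Fredholmness by the kernel-plus-complement argument you give.

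The sufficiency and index parts, however, contain two genuine gaps, both of which you flag but neither of which you close, and they are precisely the $H^1$-specific content of the theorem. First, the compactness of the semi-commutator requires that the Hankel symbols produced by $P$, $Q$, conjugation and reflection from $a,b\in C\cap VMO_{\log}$ again satisfy the \cite{PapV} compactness criterion, i.e.\ that $Pa\in VMO_{\log}$; this needs the boundedness of the Riesz projection on $VMO_{\log}$ (or an equivalent statement extracted from \cite{Jan}/\cite{PapV}), which is true but is not a formality---note that $P$ is unbounded on $L^\infty$ and on $H^1$ itself, so nothing here is automatic. Second, and more seriously, the index computation rests on the operator-norm continuity of $s\mapsto T_{\exp(sg)}$ on $H^1$, equivalently on the continuity of $s\mapsto Q(e^{sg})$ in the $BMO_{\log}$ seminorm. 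This amounts to showing that $\{f\in L^\infty: Qf\in BMO_{\log}\}=\Lip_{\log}+H^\infty$, normed by $\|f\|_\infty+\|Qf\|_{BMO_{\log}}$, is a Banach algebra admitting a continuous functional calculus on $C\cap VMO_{\log}$ (or, alternatively, that smooth functions are dense in $VMO_{\log}$ in the $BMO_{\log}$ seminorm, so that one can instead approximate $b$ by a nonvanishing rational symbol of winding number zero and use Wiener--Hopf factorization plus stability of the index). Either variant requires a quantitative multiplier estimate that your outline does not supply; until it is supplied, the conclusion $\Ind T_b=0$ for $\wind b=0$ is unproved. So the proposal is the right strategy---the same one the cited reference uses---but it is an outline with its two hardest analytic steps left open rather than a proof.
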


The above theorem readily implies the following characterization of the essential spectrum of $T_a$.

\begin{corollary}
If $a \in C \cap VMO_{\log},$ then $\sigma_{\rm ess}(T_a) = a(\T).$
\end{corollary}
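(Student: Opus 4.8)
The plan is to reduce the computation of $\sigma_{\rm ess}(T_a)$ to a direct application of Theorem~\ref{continuous symbols}. The starting observation is the elementary identity $T_a - \lambda I = T_{a-\lambda}$, valid for every $\lambda\in\C$ since $P$ is linear and $P1 = 1$. Hence $\lambda\in\sigma_{\rm ess}(T_a)$ if and only if the Toeplitz operator $T_{a-\lambda}$ fails to be Fredholm on $H^1$, and the whole problem is to decide for which $\lambda$ this happens.

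Next I would check that the symbol class $C\cap VMO_{\log}$ is stable under subtracting constants. This is routine: constants clearly lie in $C$, and a constant function has zero mean oscillation over every subarc $I$, so it lies in $VMO_{\log}$ trivially. Therefore $a-\lambda\in C\cap VMO_{\log}$ whenever $a$ is, and Theorem~\ref{continuous symbols} is applicable to the symbol $a-\lambda$ for every $\lambda\in\C$.

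Applying that theorem to $T_{a-\lambda}$, we get that $T_{a-\lambda}$ is Fredholm precisely when $a-\lambda$ is nowhere zero on $\T$, that is, precisely when $\lambda\notin a(\T)$. Equivalently, $T_{a-\lambda}$ fails to be Fredholm exactly when $\lambda\in a(\T)$. Combining this with the first paragraph gives $\sigma_{\rm ess}(T_a) = a(\T)$, as claimed. (If desired, the index formula $\Ind T_{a-\lambda} = -\wind(a-\lambda)$ for $\lambda\notin a(\T)$ comes along for free from the same theorem, but it is not needed here.)

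I do not anticipate any genuine obstacle: the argument is a two-line reduction, and the only points requiring verification are the identity $T_a-\lambda I = T_{a-\lambda}$ and the invariance of $C\cap VMO_{\log}$ under constant shifts, both of which are immediate.
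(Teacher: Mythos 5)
Your proof is correct and is exactly the argument the paper has in mind: the corollary is stated as an immediate consequence of Theorem~\ref{continuous symbols}, obtained by noting $T_a-\lambda I=T_{a-\lambda}$ and that $a-\lambda\in C\cap VMO_{\log}$, so that Fredholmness fails precisely for $\lambda\in a(\T)$. No further comment is needed.
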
 

This, however, is not the largest set of continuous symbols that generate bounded Toeplitz operators on $H^1$. Indeed, if $a\in C$ and $Qa\in BMO_{\log}$, then the Fredholm properties of $T_a$ are not known. We conjecture that still $\sigma_{\rm ess} (T_a) = a(\T)$ and the index formula remains the same. It is worth noting that the approach of Theorem~\ref{continuous symbols} is not applicable because it relies on the compactness of Hankel operators, which requires that $Pa\in VMO_{\log}$.

Another class of symbols for which the Fredholm properties are understood in $H^1$ is a certain Douglas-type algebra (see~\cite{PapV}):

\begin{theorem}
Let $a\in C\cap VMO_{\log} + \overline{H^\infty}\cap BMO_{\log}$. Then $T_a$ is Fredholm on $H^1$ if and only if there are $\epsilon>0$ and $\delta>0$ such that $|a(z)| \geq \epsilon$ whenever $1-\delta<|z|<1$, in which case $\Ind T_a = -\wind a_r$, where $a_r(t) = a(rt)$ with $1-\delta<r<1$ and $a(z)$ is the harmonic extension of $a$ for $z\in\D$. In particular, we get the following easy consequence:
$$
	\sigma_{\rm ess} (T_a) = \bigcap_{0<r<1} \overline{a(\D\setminus r\D)}.
$$
\end{theorem}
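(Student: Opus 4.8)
The plan is to transplant to $H^1$ the Douglas--Sarason theory of Toeplitz operators with symbols in $H^\infty+C$, in the logarithmically weighted form dictated by $H^1$. Write $\mathcal B=C\cap VMO_{\log}+\overline{H^\infty}\cap BMO_{\log}$. Every $a\in\mathcal B$ lies in $L^\infty$ with $Qa\in BMO_{\log}$, so $T_a$ is bounded by Theorem~\ref{boundedness}, and one checks that $\mathcal B$ is a closed subalgebra of $L^\infty$ on which $Pa\in VMO_{\log}$ for every $a$ (the analytic part of a $C\cap VMO_{\log}$ function stays in $VMO_{\log}$, while the analytic part of an $\overline{H^\infty}$ function is constant). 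Granting the Fredholm criterion, the essential-spectrum formula is immediate: $\lambda\notin\sigma_{\mathrm{ess}}(T_a)$ iff $T_{a-\lambda}$ is Fredholm iff $|a(z)-\lambda|\ge\epsilon$ on some annulus $1-\delta<|z|<1$ (the harmonic extension of $a-\lambda$ being $a(z)-\lambda$), iff $\lambda\notin\overline{a(\D\setminus r\D)}$ for some $r\in(0,1)$, iff $\lambda\notin\bigcap_{0<r<1}\overline{a(\D\setminus r\D)}$. So it remains to prove the Fredholm criterion and the index formula.

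The first step is that the symbol map is multiplicative modulo compacts on $\mathcal B$. From the identity $T_aT_b-T_{ab}=-(PM_aQ)(QM_bP)$ on $H^1$, the fact that the Hankel-type operator $PM_aQ$ is compact on $H^1$ as soon as $Pa\in VMO_{\log}$ (the $H^1$ analogue of the Hankel-compactness criterion, \cite{PapV}), and the boundedness of $QM_bP$ when $b\in L^\infty$ and $Qb\in BMO_{\log}$, one gets $T_aT_b\equiv T_{ab}$ modulo $\mathcal K(H^1)$ for all $a,b\in\mathcal B$. Thus $a\mapsto T_a+\mathcal K(H^1)$ is a commutative homomorphism of $\mathcal B$ into the Calkin algebra of $H^1$, injective since a nontrivial Toeplitz operator is never compact. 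For sufficiency in the Fredholm criterion, suppose $|a(z)|\ge\epsilon$ on $1-\delta<|z|<1$; then the conjugate symbol $\bar a$ belongs to the ``analytic'' algebra $\overline{\mathcal B}\subseteq H^\infty+C$ and its harmonic extension is bounded below in modulus near $\T$, so by a logarithmically weighted version of Sarason's description of the maximal ideal space of $H^\infty+C$---established in \cite{PapV}, with Gelfand transform equal to the boundary cluster values of the harmonic extension---$\bar a$ is invertible in $\overline{\mathcal B}$; conjugating the inverse back into $\mathcal B$ and using the homomorphism property yields a two-sided regulariser of $T_a$, so $T_a$ is Fredholm. For necessity one runs this backwards: Fredholmness of $T_a$ makes its Calkin class invertible in the image of $\mathcal B$, which (again by the maximal ideal space description) is inverse-closed in the Calkin algebra, so $a$ is invertible in $\mathcal B$, forcing every boundary cluster value of its harmonic extension to be nonzero, i.e.\ $|a(z)|\ge\epsilon$ near $\T$.

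For the index, fix $r\in(1-\delta,1)$. Since the harmonic extension is smooth on the compact annulus $\{r\le|z|\le(1+r)/2\}$, the dilate $a_r\in C^\infty(\T)\subseteq C\cap VMO_{\log}$, and $|a_r(t)|=|a(rt)|\ge\epsilon$, so $a_r$ is nowhere zero and, by Theorem~\ref{continuous symbols}, $T_{a_r}$ is Fredholm with $\Ind T_{a_r}=-\wind a_r$; joint smoothness also makes $r\mapsto a_r$ continuous into $C\cap VMO_{\log}$, so $\wind a_r=:\kappa$ is independent of $r\in(1-\delta,1)$. By the first step, $T_a\equiv T_{a/a_r}T_{a_r}$ modulo compacts (here $1/a_r\in C\cap VMO_{\log}$ and $a/a_r\in\mathcal B$), whence $T_{a/a_r}$ is Fredholm and $\Ind T_a=\Ind T_{a/a_r}-\kappa$. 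It therefore suffices to see $\Ind T_{a/a_r}=0$; since $a/a_r\to1$ boundedly a.e.\ on $\T$ as $r\to1$, this follows from the component structure of the group of invertibles of $\mathcal B$ (via the reduction to a normal form $\bar\chi^{\,N}\cdot(\text{finite Blaschke product})\cdot(\text{invertible})$ used for $H^\infty+C$, modulo a perturbation small in $\mathcal B$), exactly as in Douglas's index theorem; as $\Ind T_{a/a_r}$ is constant in $r\in(1-\delta,1)$ this gives $\Ind T_a=-\kappa=-\wind a_r$ for all such $r$.

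The technical heart---and the main obstacle---is the necessity half of the criterion (and, relatedly, the triviality of $\Ind T_{a/a_r}$), which is exactly where the Banach-algebra machinery of \cite{PapV} is indispensable: the conjugate-analytic summand $\bar h$ of the symbol is not approximable in $L^\infty$ by its dilates, so the direct attack fails. Concretely, if $a(z_n)\to0$ with $z_n\to\T$ and one tests $T_a$ (or its adjoint) against the normalised $H^1$ kernels $f_z(t)=(1-|z|^2)(1-\bar zt)^{-2}$, the exact identity $T_{\bar h}f_z=\overline{h(z)}f_z+(1-|z|^2)\bar z\,\overline{h'(z)}\,k_z$ with $k_z(t)=(1-\bar zt)^{-1}$, together with $T_cf_z=c(z)f_z+o(1)$ (via Fej\'er approximation of $c$ in $C\cap VMO_{\log}$ and the quantitative form of Theorem~\ref{boundedness}), gives only $\|T_af_z-a(z)f_z\|_{H^1}\le(1-|z|^2)|h'(z)|\,\|k_z\|_{H^1}+o(1)$; here $\|k_z\|_{H^1}$ is comparable to $\log\frac1{1-|z|}$, while the $BMO_{\log}$ gradient bound forces $(1-|z|^2)|h'(z)|$ to be at most a constant times $\big(\log\frac1{1-|z|}\big)^{-1}$, so the error is $O(1)$, not $o(1)$---the logarithmic blow-up of the $H^1$ reproducing kernels cancels precisely the logarithmic gain of $BMO_{\log}$. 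Closing this gap is what the maximal ideal space / inverse-closedness argument of \cite{PapV} accomplishes.
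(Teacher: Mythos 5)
First, a point of reference: the paper does not prove this theorem---it is quoted from \cite{PapV}---so there is no internal argument to compare yours against. Your outline does follow the route one expects \cite{PapV} to take (the Douglas--Sarason blueprint transplanted to $H^1$): boundedness from Theorem~\ref{boundedness}, the semicommutator identity together with compactness of the Hankel operators with symbols in the algebra to obtain a homomorphism into the Calkin algebra, invertibility in the Douglas-type algebra read off from the harmonic extension, and the index by comparison with the dilates $a_r$ via Theorem~\ref{continuous symbols}. The derivation of the essential-spectrum formula from the Fredholm criterion is correct, and your closing paragraph is a genuinely useful observation about why the reproducing-kernel attack degenerates at $p=1$ (the logarithmic growth of $\|k_z\|_{H^1}$ exactly cancels the logarithmic gain in $BMO_{\log}$).

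Read as a proof, however, the load-bearing steps are missing. (i) That $C\cap VMO_{\log}+\overline{H^\infty}\cap BMO_{\log}$ is a closed subalgebra of $L^\infty$ is the logarithmic analogue of Sarason's theorem on $H^\infty+C$ and is assumed, not proved; it is needed even to make sense of ``invertible in $\mathcal{B}$'' and of $a/a_r\in\mathcal{B}$. (ii) Both directions of the Fredholm criterion rest on the claims that invertibility in $\mathcal{B}$ is equivalent to the harmonic extension being bounded below near $\T$, and that the Calkin image of $\mathcal{B}$ is inverse-closed; in the $H^1$ setting there is no ambient $C^*$-algebra, so neither claim comes for free, and your appeal to ``the maximal ideal space / inverse-closedness argument of \cite{PapV}'' is---as your own last paragraph concedes---an appeal to essentially the content of the theorem rather than to an independent lemma you establish. (iii) The final index step fails as justified: bounded a.e.\ convergence $a/a_r\to1$ gives no control on $\|T_{a/a_r}-I\|$ nor on its Calkin class, and indeed you observe two sentences later that the coanalytic summand is \emph{not} $L^\infty$-approximable by its dilates; this is precisely why $\Ind T_{a/a_r}=0$ cannot be obtained by a norm-perturbation argument and why the homotopy/normal-form analysis in the invertible group of $\mathcal{B}$ would have to be carried out explicitly in the logarithmic setting. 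Until (i)--(iii) are supplied, what you have is an accurate map of the proof, not the proof.
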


In connection with the previous theorem, it is worth noting the following result of~\cite{PapV}:
$$
	C\cap VMO_{\log} + \overline{H^\infty}\cap BMO_{\log} = \{ f\in L^\infty\cap BMO_{\log} : H_f\ \textrm{is compact on}\ H^1\},
$$
which ``almost'' implies that in order to deal with other classes of discontinuous symbols, we need to make do without compact Hankel operators. This leads us to piecewise continuous symbols.

We first recall the following little known result of Lindel\"of~\cite{Lin}, which shows that functions in $H^\infty$ cannot have discontinuities of the first kind; see also Exercise 7 of Chapter~II of~\cite{Gar}. 

\begin{theorem}
Functions in $H^\infty$ cannot have jumps, that is, if $f \in H^\infty$ and the one-sided limits $f(t\pm 0)$ exist at some $t \in \T$, then $f(t+0) = f(t-0)$.
\end{theorem}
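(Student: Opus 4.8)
The plan rests on an algebraic trick combined with the boundary behaviour of Poisson extensions. Write $a_\pm = f(t\pm 0)$ for the two one-sided limits; rotating the circle we may assume $t=1$, so that the boundary function satisfies $f(e^{i\theta})\to a_+$ as $\theta\to 0^+$ and $f(e^{i\theta})\to a_-$ as $\theta\to 0^-$. Consider the product $g=(f-a_+)(f-a_-)$. On the one hand $g(e^{i\theta})\to(a_+-a_+)(a_+-a_-)=0$ as $\theta\to 0^+$ and $g(e^{i\theta})\to(a_--a_+)(a_--a_-)=0$ as $\theta\to 0^-$, so $g$ is \emph{continuous at $1$ with value $0$ there}. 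On the other hand, since $H^\infty$ is an algebra containing the constants, $g=f^2-(a_++a_-)f+a_+a_-\in H^\infty$.

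Now I would pass to the disc. Let $F$ and $G$ be the analytic (equivalently, Poisson) extensions of $f$ and $g$ to $\D$, so that $G=(F-a_+)(F-a_-)$. Since the Poisson integral of a bounded function that is continuous at a boundary point extends continuously to the closed disc at that point --- the Poisson kernel being an approximate identity that concentrates at the point even under unrestricted approach --- we obtain $\lim_{z\to 1,\,z\in\D}G(z)=0$, the limit being unrestricted.

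The key step is to promote this into a genuine limit for $F$ itself. Suppose, towards a contradiction, that $a_+\ne a_-$ and put $m=|a_+-a_-|>0$. If $|w-a_+|\,|w-a_-|\le\epsilon$ then $\min(|w-a_+|,|w-a_-|)\le\sqrt{\epsilon}$, so the sublevel set $\{\,|(w-a_+)(w-a_-)|\le\epsilon\,\}$ is contained in $\overline{B}(a_+,\sqrt{\epsilon})\cup\overline{B}(a_-,\sqrt{\epsilon})$, a disjoint union once $2\sqrt{\epsilon}<m$. Given such an $\epsilon$, choose $\delta>0$ with $|G|<\epsilon$ on the lune $L_\delta=\D\cap\{|z-1|<\delta\}$. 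This lune is convex, hence connected, and $F$ is continuous on it, so $F(L_\delta)$ is a connected subset of the above disjoint union and therefore lies entirely in one of the two balls. Running $\epsilon$ and $\delta$ to $0$ along a sequence, the lunes are nested, hence so are the image sets, which forces the relevant ball to stabilise about a single centre --- say $a_+$ --- while its radius tends to $0$; thus $\lim_{z\to 1,\,z\in\D}F(z)=a_+$, again an unrestricted limit.

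It remains to read this off on the boundary. For a.e.\ $\theta$ the radial limit $\lim_{r\to 1^-}F(re^{i\theta})$ exists and equals $f(e^{i\theta})$, and for $\theta$ with $|e^{i\theta}-1|<\delta/2$ the terminal portion of the radial segment at $e^{i\theta}$ lies inside $\D\cap\{|z-1|<\delta\}$; feeding in $|F-a_+|<\epsilon$ there and letting $r\to 1$ gives $|f(e^{i\theta})-a_+|\le\epsilon$ for a.e.\ $\theta$ in a punctured left-neighbourhood of $0$. Since $f(e^{i\theta})\to a_-$ as $\theta\to 0^-$, this forces $|a_--a_+|\le\epsilon$; as $\epsilon$ was arbitrary, $a_+=a_-$, contradicting our assumption, so $f(t+0)=f(t-0)$. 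I expect the third paragraph to be the only real obstacle: the first two steps are essentially formal and the last is a routine passage to radial boundary values, whereas turning $(F-a_+)(F-a_-)\to 0$ into $F\to a_+$ genuinely needs both the analyticity of $F$ (so that $F$ is continuous on the connected lune) and the separation $a_+\ne a_-$.
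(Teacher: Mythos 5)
Your proof is correct, and it is genuinely self-contained, whereas the paper offers no proof of this statement at all: it simply cites Lindel\"of's 1915 paper and Exercise 7 of Chapter II of Garnett. The classical arguments behind that citation run either through Lindel\"of's theorem on asymptotic values (a bounded analytic function with a limit along one curve ending at $t$ has that same value as nontangential limit at $t$) or through the observation that the harmonic conjugate of a jump has a logarithmic singularity, so no bounded analytic function can have one. Your route avoids both: the quadratic trick converts the hypothetical jump of $f$ into a genuine two-sided boundary limit $0$ for $g=(f-a_+)(f-a_-)$, the approximate-identity property of the Poisson kernel upgrades this to an unrestricted interior limit $G(z)\to 0$, and the connectedness of the lunes together with the splitting of the sublevel set $\{w : |(w-a_+)(w-a_-)|\le\epsilon\}$ into two disjoint balls of radius $\sqrt{\epsilon}$ forces $F$ itself to have an unrestricted limit at $t$, which is incompatible with two distinct one-sided boundary values. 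Each step checks out: $H^\infty$ is an algebra, so $g\in H^\infty$ and its Poisson extension really is $(F-a_+)(F-a_-)$ (this is the one place analyticity is indispensable, as you note, since for merely harmonic extensions the product identity fails); the stabilisation of the centre holds because a switch from $a_+$ to $a_-$ between consecutive lunes would place the nonempty connected set $F(L_{\delta_{n+1}})$ inside two balls that are still disjoint, as $\sqrt{\epsilon_n}+\sqrt{\epsilon_{n+1}}<2\sqrt{\epsilon_n}<m$; and the final boundary passage needs only Fatou's theorem on a full-measure set of angles. The one cosmetic caveat is that the one-sided limits of the $L^\infty$ function $f$ must be read in the essential (almost-everywhere) sense, but your argument uses nothing stronger than almost-everywhere statements, so this costs nothing. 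In short, this is a valid and more elementary substitute for the Lindel\"of citation.
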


We use Lindel\"of's result to prove the following theorem.

\begin{theorem}\label{main}
If $a$ is the symbol of a bounded Toeplitz operator on $H^1$, then $a$ cannot have jump discontinuities.
\end{theorem}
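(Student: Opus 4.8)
The plan is to combine the decomposition \eqref{e:decomposition} of the symbol class with Lindel\"of's theorem on the absence of jumps in $H^\infty$. Suppose $T_a$ is bounded on $H^1$. By Theorem~\ref{boundedness} we have $a\in L^\infty$ with $Qa\in BMO_{\log}$, and hence by \eqref{e:decomposition} we may write $a = l + h$ with $l\in\Lip_{\log}$ and $h\in H^\infty$. The function $l$ is continuous on $\T$, so it has no jumps; thus any one-sided limits of $a$ that exist at a point $t\in\T$ differ from the corresponding one-sided limits of $h$ by the single value $l(t)$. In particular, if $a(t\pm 0)$ both exist, then $h(t\pm 0)$ both exist and $a(t+0)-a(t-0) = h(t+0)-h(t-0)$.

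The next step is to invoke Lindel\"of's theorem: since $h\in H^\infty$ and the one-sided limits $h(t\pm 0)$ exist, we conclude $h(t+0) = h(t-0)$, whence $a(t+0) = a(t-0)$. This shows that $a$ has no jump discontinuity at $t$, and since $t$ was arbitrary, $a$ has no jump discontinuities at all.

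The only genuine subtlety is making sure that the decomposition $a = l+h$ is legitimately in force and that ``$h\in H^\infty$ has one-sided limits at $t$'' is exactly the hypothesis Lindel\"of's theorem requires. The first point is handled entirely by the already-established equality \eqref{e:decomposition}, which is derived in the excerpt from Janson's description \eqref{e:Janson} of $BMO_{\log}$. The second point is immediate: continuity of $l$ at $t$ (indeed $\Lip_{\log}$-regularity, which is stronger than continuity) transfers the existence of the one-sided limits of $a$ to the one-sided limits of $h = a - l$, with the jump of $a$ equal to the jump of $h$. There is essentially no computational obstacle here; the proof is a short deduction once the two ingredients — the structural decomposition and Lindel\"of's rigidity result — are in hand. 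If one wishes, one can phrase the conclusion contrapositively as in the statement: were $a$ to have a jump at some $t$, then $h$ would have the same jump there, contradicting Lindel\"of.
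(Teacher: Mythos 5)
Your proposal is correct and follows essentially the same route as the paper: decompose $a = l + h$ with $l\in\Lip_{\log}$ and $h\in H^\infty$ via Theorem~\ref{boundedness} and~\eqref{e:decomposition}, then note that a jump of $a$ would force the same jump in $h$, contradicting Lindel\"of's theorem. The extra care you take in checking that the one-sided limits transfer from $a$ to $h$ is a welcome (if minor) elaboration of the paper's one-line computation.
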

\begin{proof}
Suppose that $T_a$ is bounded on $H^1$. By Theorem~\ref{boundedness}, the symbol $a$ is in $L^\infty$ and $Qa$ is in $BMO_{\log}$, and so $a = f + g$ for some $f\in \Lip_{\log}$ and $g\in H^\infty$ according to~\eqref{e:decomposition}. If $a(t-0)\neq a(t+0)$ for some $t\in\T$, then
$$
	g(t-0) = a(t-0) - f(t) \neq a(t+0) - f(t) = g(t+0),
$$
which contradicts the fact that $g$ cannot have jump discontinuities. 
\end{proof}

\begin{remark}
We can prove the preceding theorem without the consequence of Janson's result and instead use the following Sarason's decomposition
$$
	VMO = \{ u + Pv : u,v\in C\}
$$
and the fact that the Riesz projection $P$ is bounded from $C$ onto $VMO$ (see, e.g., 1.48 of \cite{BS}). Now if $f \in C+H^\infty$, then clearly $f\in L^\infty$ and $Qf \in VMO$; and conversely, if $f\in L^\infty$ with $Qf\in VMO$, then $Qf = u+Pv$ for $u,v\in C$, and $Qf=Qu$, which implies $f-u\in H^\infty$, which gives the desired decomposition $f = u + (f-u) \in C + H^\infty$. Thus,
$$
	\{ f\in L^\infty : Qf \in VMO \} = C+H^\infty.
$$
It remains to observe that if $f\in BMO_{\log}$ and  $$\|f\|_{BMO_{\log}}=\sup_I  \frac{\log \frac{4\pi}{|I|}}{|I|}\left( \int_I |f-f_I| \right)$$ is the $BMO_{\log}-$seminorm of $f,$ then
$$
	\frac1{|I|} \int_I |f-f_I| = \frac{\log \frac{4\pi}{|I|}}{|I|}\left( \int_I |f-f_I| \right) \frac1{\log\frac{4\pi}{|I|}}
	\leq \frac{\|f\|_{BMO_{\log}}}{\log\frac{4\pi}{|I|}} \to 0
$$
as $|I|\to 0$, and so $BMO_{\log} \subset VMO$. Therefore, in the proof of Theorem~\ref{main}, we get $a = f+g$ for some $f\in C$ and $g\in H^\infty$ without Janson's result.
\end{remark}

We write $BMOA = BMO\cap H^1 = \{f\in BMO : f_k = 0\ {\rm for}\ k<0\}$. Fefferman's duality result $(H^1)^* = BMOA$ implies the following consequence of the previous theorem.

\begin{corollary}
If $a$ is the symbol of a bounded Toeplitz operator on $BMOA$, then $a$ cannot have jump discontinuities.
\end{corollary}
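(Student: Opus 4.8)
The plan is to transfer boundedness of $T_a$ on $BMOA$ to boundedness of $T_{\bar a}$ on $H^1$ by duality and then quote Theorem~\ref{main}. Recall that the symbols considered in this paper lie in $L^\infty$, so $a\in L^\infty$ and both $T_a$ and $T_{\bar a}$ are defined, at least on analytic polynomials. Write $\langle f,g\rangle=\sum_{k\geq 0}f_k\overline{g_k}$ for the pairing that implements Fefferman's duality $(H^1)^*=BMOA$; it agrees with the $L^2$ inner product on analytic polynomials. For analytic polynomials $u,v$, a one-line Parseval computation, using that $u$ and $v$ are analytic so that the projection $P$ may be inserted into or removed from the pairing at will, yields the formal self-duality relation
\[
	\langle T_a u,v\rangle=\langle au,v\rangle=\langle u,\bar a v\rangle=\langle u,T_{\bar a}v\rangle .
\]

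Using this, the first step is to show that if $T_a$ is bounded on $BMOA$, then $T_{\bar a}$ is bounded on $H^1$. Let $VMOA$ denote the closure of the analytic polynomials in $BMOA$, so that $H^1$ is isometrically the dual of $VMOA$ under the same pairing (Sarason). Fix an analytic polynomial $f$. For $g\in VMOA$ with $\|g\|_{BMOA}\leq 1$, the displayed identity together with the inequality $|\langle f,h\rangle|\leq\|f\|_{H^1}\|h\|_{BMOA}$ coming from Fefferman's duality gives
\[
	|\langle T_{\bar a}f,g\rangle|=|\langle f,T_a g\rangle|\leq\|f\|_{H^1}\,\|T_a g\|_{BMOA}\leq\|T_a\|\,\|f\|_{H^1},
\]
where the last step uses $VMOA\subset BMOA$ with the inherited norm. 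Taking the supremum over all such $g$ recovers $\|T_{\bar a}f\|_{H^1}$, so $T_{\bar a}$ is bounded on the dense subspace of analytic polynomials of $H^1$, hence on all of $H^1$.

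Finally, Theorem~\ref{main} applied to $T_{\bar a}$ shows that $\bar a$ has no jump discontinuities, and since $a(t\pm 0)=\overline{\bar a(t\pm 0)}$ whenever these one-sided limits exist, neither does $a$. The only step that is not entirely routine is the passage from boundedness on $BMOA$ to boundedness on $H^1$: it is a pre-adjoint statement and genuinely uses that $H^1$ is the dual of $VMOA$ (Sarason), not merely that $BMOA$ is the dual of $H^1$ (Fefferman). The remaining ingredients, namely the density of polynomials in $H^1$ and the fact that the Parseval manipulations are applied only to polynomials where they are unambiguous, are standard.
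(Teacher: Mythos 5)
Your proof is correct and follows the same duality route the paper intends: the paper simply asserts that Fefferman's duality $(H^1)^*=BMOA$ yields the corollary from Theorem~\ref{main}, without writing out the argument. You have supplied the detail the paper leaves implicit --- namely that passing from boundedness of $T_a$ on $BMOA$ to boundedness of the pre-adjoint $T_{\bar a}$ on $H^1$ genuinely requires Sarason's identification of $H^1$ with the dual of $VMOA$, not merely $(H^1)^*=BMOA$ --- which is exactly the right way to make the one-line argument rigorous.
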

 
Our conclusion is that Fredholm theory for Toeplitz operators with piecewise continuous symbols cannot unfortunately be extended to the Hardy space $H^1$ (or to $BMOA$) within the context of the Banach algebra of bounded linear operators. It may be possible to consider this question in the framework of unbounded Fredholm operators. 

\textbf{Acknowledgments}. The authors thank Albrecht B\"ottcher for useful remarks.

\end{document}